\documentclass[12pt]{amsart}

\usepackage{latexsym, amsmath, amscd, amssymb, amsthm} 
\usepackage[T2A]{fontenc}
 \usepackage[cp1251]{inputenc}
\textwidth=17cm
\textheight=23.7cm
\voffset=-0.5 cm
\hoffset=-1.4cm
\newtheorem{te}{Theorem}
\newtheorem{lm}{Lemma}

\begin{document}

\noindent

 \title{  A  note on the  Poincar\'e series   of the invariants  of   ternary  forms  }

\author{Leonid Bedratyuk}\address{Khmelnytsky National University, Instytuts'ka st. 11, Khmelnytsky, 29016, Ukraine}

\begin{abstract}

Analogue of Springer's formula for the  Poincar\'e series of  the  algebra invariants of  ternary  form  is  found.
\end{abstract}
\maketitle

\noindent
{
\noindent
{\bf 1.}  Let  $I_{n,d}$ be graded algebra of invariants  of  $n$-ary form of degree  $d$:

$$
I_{n,d}=(I_{n,d})_0+(I_{n,d})_1+\cdots+(I_{n,d})_k+ \cdots,
$$
and 
$$
P_{n,d}(T)=1+\dim((I_{n,d})_1) T+\cdots +\dim((I_{n,d})_k) T^k+\cdots
$$ 
be its  Poincar\'e series. By using Sylvester-Cayley formula,  the series  $P_{2,d}(T)$  for  small   $d$  were calculated by Sylvester and Franclin in     \cite{SF}. The explicit  formula for calculation of  the Poincare  series was derived by Springer, see \cite{SP}. Namely,
$$
P_{2,d}(z)=\sum_{0\leq k <d/2} (-1)^k \psi_{d-2\,k} \left( \frac{(1-z^2)z^{k(k+1)}}{(1-z^2)(1-z^4)\ldots(1-z^{2k}) (1-z^2)(1-z^4)\ldots (1-z^{2d-2j})} \right),
$$
here
$$
(\psi_n f)(z^n)=\frac{1}{n} \sum_{k=1}^n f(e^{\frac{2i\pi k}{n}}z),
$$
for arbitrary rational function  $f \in \mathbb{C}(t).$  
By  using  the  formula the Poicare series $P_{2,d}(T)$   for  $d<17$  was calculated  in \cite{BC}. Also,  by using a  variation of the  formula  in   \cite{LP}   an algorithm for the computation of the Poincar\'e series is proposed  and these   series were calculated for even $d\leq 36.$ 
 
 For the  Poincare series of  compact group   $G$ there exists  the Molien–Weyl integral formula. In  the  case 
 $G=SL(2, \mathbb{C})$ it  can  be written in  the following  form
$$
P_{2,d}(T)=\frac{1}{2 \pi i} \oint_{|z|=1} \frac{1-z^{-2}}{\prod_{k=1}^n (1-T z^{d-2\,k})} \frac{dz}{z},   |z|<1,
$$
 see  \cite{DerK},  page 183.   By  calculation   the  integral, in  \cite{DD} the   series  $P_{2,d}(T)$   was found  for $d\leq 30.$

We  know very  little about the Poicare series $P_{3,d}(T)$  for  the algebra  of  invariants of  ternary  form.  In  the  paper     \cite{Shi}  the  series  $P_{3,\,4}(T)$   was  calculated. Also, in  the paper \cite{ASM}  an  analogue of Sylvester-Cayley  formula was  derived and listed several  first terms of  Poincare series $P_{3,d}(t)$  for small $d\leq 7.$ 

The aim of this paper is to derive an analogue of  Springer's formula for the  Poincare series of  the  algebra invariants of the ternary  form.

{\bf 2.}  We begin by short proving the Springer's formula for the Poincar\'e  series of the  algebra invariants of  the binary form.  Let us  consider the $\mathbb{C}$-algebra $\mathbb{C}[[z]]$   of  formal   power series in    $z.$
For arbitrary   $n \in \mathbb{N}$  define  $\mathbb{C}$-linear function
$$\varphi_{n}: \mathbb{C}[[z]] \to \mathbb{C}[[z]]$$  in the  following  way
$$
\varphi_{n}\bigl(z^m\bigr):=\left \{ \begin{array}{l} z^{\frac{m}{n}}, \text{ if  } m= 0 \mod n,  \\ 0, \text{  if } m \neq 0 \mod n,\\ 1,   \text {    for }   m=0. \end{array} \right. 
$$

Then  for arbitrary series 

 $$A=a_0+a_1 z+a_2 z^2+ \cdots ,$$ we  get 
$$
\varphi_{n}(A)=a_0+a_n z+a_{2\,n} z^2+\cdots+a_{s\, n} z^s+\cdots .
$$
The lemma give us the explicit forms of  the function 
 $\varphi_{n},$ $n>0$ 
\begin{lm} For any  $f \in \mathbb{C}[[z]]$    the following representations  hold
$$
\begin{array}{ll}
 (i) &  \displaystyle \varphi_{n}(f(z))=\frac{1}{n} \sum_{k=1}^{n} f(z \,e^{\frac{2\pi i\,k}{n}} ) \Big|_{z^n=z};
 \\
 (ii) &  \displaystyle \varphi_{n}(f(z))=\frac{1}{2 \pi} \int_{0}^{2\pi} \frac{f(z e^{i \theta})}{1-e^{-i\, n \,\theta}} d\theta \Big|_{z^n=z}.
 \\
\end{array}
$$

\end{lm}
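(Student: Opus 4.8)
The plan is to verify both identities on the monomials $f(z)=z^m$ and then extend to arbitrary $f$ by $\mathbb{C}$-linearity. This reduction is harmless: every operation on the two right-hand sides is $\mathbb{C}$-linear in $f$, and for a series $A=\sum_{m\ge 0}a_m z^m$ the coefficient of $z^s$ in the output depends only on the single input coefficient $a_{sn}$. Hence it suffices to match, for each $m$, the value produced by the right-hand side against the defining value $\varphi_n(z^m)$, which is $z^{m/n}$ when $n\mid m$ and $0$ otherwise.

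For $(i)$ the engine is the root-of-unity filter $\tfrac{1}{n}\sum_{k=1}^{n}e^{2\pi i k m/n}$, which equals $1$ if $n\mid m$ and $0$ if $n\nmid m$. Substituting $f(z)=z^m$ into the right-hand side gives
$$\frac{1}{n}\sum_{k=1}^{n}\bigl(z\,e^{2\pi i k/n}\bigr)^{m}\Big|_{z^n=z}=z^{m}\Bigl(\tfrac{1}{n}\sum_{k=1}^{n}e^{2\pi i k m/n}\Bigr)\Big|_{z^n=z}.$$
When $n\mid m$ the averaged sum is $1$ and the substitution $z^n=z$ turns $z^{m}=(z^n)^{m/n}$ into $z^{m/n}$; when $n\nmid m$ the averaged sum vanishes. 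This reproduces $\varphi_n(z^m)$ exactly, so $(i)$ follows.

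For $(ii)$ I would expand the kernel as a geometric series $\tfrac{1}{1-e^{-in\theta}}=\sum_{j\ge 0}e^{-ijn\theta}$ and integrate $f(ze^{i\theta})=\sum_{m\ge 0}a_m z^m e^{im\theta}$ against it term by term. The orthogonality relation $\tfrac{1}{2\pi}\int_0^{2\pi}e^{i(m-jn)\theta}\,d\theta$, which is $1$ for $m=jn$ and $0$ otherwise, then selects precisely the indices $m=jn$ with $j\ge 0$ and yields $\sum_{j\ge 0}a_{jn}z^{jn}$, which becomes $\varphi_n(f)$ after the substitution $z^n=z$. On a monomial $z^m$ this again returns $z^{m/n}$ for $n\mid m$ and $0$ otherwise, in agreement with $(i)$.

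The one genuinely delicate point, and the step I expect to be the main obstacle, is justifying the termwise integration in $(ii)$: the kernel $\tfrac{1}{1-e^{-in\theta}}$ has simple poles on the contour at $\theta=2\pi k/n$, so the geometric series does not converge pointwise on $|e^{i\theta}|=1$ and the integral is only conditionally defined. The cleanest remedy is a radial (Abel) regularization, replacing the kernel by $\tfrac{1}{1-re^{-in\theta}}=\sum_{j\ge 0}r^j e^{-ijn\theta}$ with $r<1$, where the series converges uniformly and termwise integration is legitimate, producing $\sum_{j\ge 0}r^j a_{jn}z^{jn}$, and then letting $r\to 1^-$. Under this reading the singular integral is a principal value agreeing with the Abel limit, and the computation above goes through verbatim.
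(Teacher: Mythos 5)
Your proof follows essentially the same route as the paper's: the root-of-unity filter $\frac{1}{n}\sum_{k=1}^{n}e^{2\pi i k m/n}$ applied to monomials for $(i)$, and termwise integration of the geometric expansion of the kernel against $f(ze^{i\theta})$ for $(ii)$. Your closing remark on Abel regularization addresses a real subtlety that the paper's proof silently passes over --- the kernel $1/(1-e^{-in\theta})$ has poles on the contour at $\theta=2\pi k/n$, so the formal termwise integration needs exactly the kind of justification you supply --- making your version, if anything, the more careful one.
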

\begin{proof}
\noindent
$(i)$

Put  $\displaystyle \xi= e^{\frac{2\pi i}{n}}.$ We  have  
$$
\frac{1}{n}\sum_{k=1}^n \bigl(\xi^k z \bigr)^m=z^m \frac{1}{n} \sum_{k=1}^n \bigl(\xi^k \bigr)^m=\left \{ \begin{array}{l} \displaystyle z^m, \text{  if  } {m=0 \pmod n},  \\  \\ 0, \text{ otherwise. } \end{array} \right.
$$
It  is follows that 
$$
 \varphi_{n}(f(z))(z^m)=\left \{ \begin{array}{l} \displaystyle z^{\frac{m}{n}} \text{  if  } {m=0 \pmod n},  \\  \\ 0, \text{  otherwise } \end{array} \right.
$$
\noindent

This construction is  due  to Simson, see \cite{TS}  or   \cite{ASF},  page  14.

\noindent
$(ii)$   Set  $f(z)=\sum_{k=0}^{\infty} f_k z^k.$ Then,  taking into account that for integer  $n$  the integral   $\displaystyle \int_0^{2\pi} e^{i\, n \theta}   d\theta$ is equal to  zero,  we  get
$$
\frac{1}{2 \pi} \int_{0}^{2\pi} \frac{f(z e^{i \theta})}{1-e^{-i\, n \,\theta}} d\theta=\frac{1}{2 \pi} \int_{0}^{2\pi} \sum_{k,s=0}^{\infty} f_k z^k e^{k\,i \theta} e^{-s n \theta}=\frac{1}{2 \pi}  \sum_{k=0}^{\infty} f_k  z^k \sum_{s=0}^{\infty}  \int_{0}^{2\pi} e^{(k-s\,n) i \theta} d\theta=
$$
$$
=\frac{1}{2 \pi}  \int_{0}^{2\pi} \sum_{s=0}^{\infty} f_{n\,s} z^{n\,s} d\theta=  \sum_{s=0}^{\infty} f_{n\,s} z^{n\,s}.
$$
After replasing  $z^n$ by $z$ we  obtain the statement of  the lemma.


\end{proof}

As  above,  to work with formal  power  series in  two letters, define  $\mathbb{C}$-linear function $ \Psi_{m,n}:\mathbb{C}[[t,z]] \to \mathbb{C}[[T]],$ $ m,n  \in \mathbb{N} $ by 
$$
\Psi_{n_1,n_2}\bigl(t^{m_1} z^{m_2}\bigr)=\left \{ \begin{array}{l} T^{s}, \text{  if  } \displaystyle  \frac{m_1}{n_1}=\frac{m_2}{n_2}= s \in \mathbb{N}, \\  \\ 0, \text{ otherwise. } \end{array} \right. 
$$
If  now
$$
A=a_{0,0}+a_{1,0} t +a_{0,1}z+ a_{2,0}t^2+\cdots,
$$
then, obviously,
$$
\Psi_{n_1,n_2}(A)=a_{0,0}+a_{n_1,n_2} T+a_{2\,n_1,2\,n_2}T^2+\cdots  .
$$
In  some  cases an  calculation the functions $ \Psi_{m,n}$  can  be reduced   to  calculation of  the functions  such as   $\varphi$.  The following  statement holds: 
\begin{lm}

 For any    $R \in \mathbb{C}[[z]]$      and  for    $m, n, k  \in \mathbb{N}$    we  have 
$$
 \Psi_{m,n}\left( \frac{R}{1-t^m z^k} \right)=\left \{ \begin{array}{l} \varphi_{n-k}(R)  \text{ if  } n\geq k, \\  \\ 0,  \text{  if  } n < k \end{array} \right. 
$$

\end{lm}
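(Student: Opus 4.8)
The plan is to reduce the statement to the coefficient-extraction definitions of $\Psi_{m,n}$ and of $\varphi_p$ by expanding the rational factor as a geometric series and applying $\Psi_{m,n}$ term by term. First I would write $R=\sum_{i\ge 0} r_i z^i$ and expand
$$
\frac{R}{1-t^m z^k}=\Bigl(\sum_{i\ge 0} r_i z^i\Bigr)\sum_{j\ge 0} t^{mj}z^{kj}=\sum_{i,j\ge 0} r_i\, t^{mj}\,z^{kj+i},
$$
which is a well-defined element of $\mathbb{C}[[t,z]]$: for each prescribed pair of exponents $(a,b)=(mj,\,kj+i)$ the index $j=a/m$ is forced (when $m\mid a$) and then $i=b-kj$ is forced as well, so every monomial $t^a z^b$ is collected from at most one pair $(i,j)$. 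Since $\Psi_{m,n}$ is $\mathbb{C}$-linear, it may therefore be applied monomial by monomial to this expansion with no rearrangement subtlety.

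Next I would invoke the definition of $\Psi_{m,n}$ on the monomial $t^{mj}z^{kj+i}$. Its $t$-exponent divided by $m$ equals $j$, while its $z$-exponent divided by $n$ equals $(kj+i)/n$; the monomial survives — contributing $T^{s}$ — precisely when these two ratios coincide and equal a common value $s$. Putting $s=j$ and $n\,s=k\,s+i$ collapses the two conditions to the single defining relation
$$
i=(n-k)\,s .
$$
Hence the coefficient of $T^{s}$ in $\Psi_{m,n}\!\left(R/(1-t^m z^k)\right)$ equals $r_{(n-k)s}$ whenever $(n-k)s$ is a nonnegative integer, and vanishes otherwise. This is the computational heart of the lemma, and everything else is reading off cases.

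From this relation the two branches are immediate. If $n\ge k$ then $(n-k)s\ge 0$ for every $s\ge 0$, so
$$
\Psi_{m,n}\!\left(\frac{R}{1-t^m z^k}\right)=\sum_{s\ge 0} r_{(n-k)s}\,T^{s},
$$
which is exactly $\varphi_{n-k}(R)$ as described by the coefficient-extraction formula for $\varphi$ at the start of Section 2, after the harmless relabelling $z\mapsto T$. If instead $n<k$, then $(n-k)s<0$ for every $s\ge 1$, so no positive power of $T$ can arise and the image reduces to its constant term, giving the value $0$ asserted in the statement. I do not expect a genuine obstacle: once the geometric expansion is set up, the argument is pure exponent bookkeeping. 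The only points demanding care are the legitimacy of the term-by-term application of $\Psi_{m,n}$ (guaranteed above by the uniqueness of the contributing index pair) and the boundary value $n=k$, where $n-k=0$ makes $\varphi_{n-k}$ ill-defined and the sum degenerates to $r_0\sum_{s\ge 0}T^{s}$; the formula is thus to be read for $n>k$ and $n<k$.
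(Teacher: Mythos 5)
Your proof is correct and follows essentially the same route as the paper's: expand the geometric factor $1/(1-t^m z^k)$ as a series, apply $\Psi_{m,n}$ term by term, and observe that only monomials with $z$-exponent $(n-k)s$ survive, giving $\sum_{s\ge 0} r_{(n-k)s}T^s=\varphi_{n-k}(R)$. Your added remarks on the $n<k$ case and the degenerate boundary $n=k$ go slightly beyond the paper, which only writes out the computation for $k<n$.
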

\begin{proof}

\noindent
 Put  $R=\sum_{j=0}^{\infty} f_{j} z^j,$ $f_j \in \mathbb{C}.$  Then for   $k < n$  we  get 
$$
 \Psi_{m,n}\left( \frac{R}{1-t^m z^k} \right)=\Psi_{m,n}\Big( \sum_{j,s\geq 0} f_j  z^j (t^m z^k)^s\Big)  =\Psi_{m,n}\Big(\sum_{s\geq 0} f_{s(n-k)} (t^m z^n)^s \Big)=\sum_{s \geq 0}  f_{s(n-k)} T^s.
$$
On the other hand  $\displaystyle \varphi_{n-k}(R)=\varphi_{n-k}\Bigl(\sum_{j=0}^{\infty} f_{j} z^j\Bigr)=\sum_{s \geq 0}  f_{s(n-k)} T^s.$
 \end{proof}
As in the proof of Lemma 1   we obtain that
\noindent

$$
 \Psi_{1,1}\left( f(t,z) \right)=\frac{1}{2\pi } \int_0^{2 \pi} f(t\,e^{- i \theta}, z\,e^{ i \theta})\,  d\theta. 
$$

The main idea of calculations of this work is that 
the  Poincar\'e series $ P_ {2, d} (T) $ can be expressed  in terms of functions $ \Psi.$ The following simple but important statement  holds

\begin{lm} 
$$
P_{2,d}(T)=\Psi_{1,d}\bigl(f_d(t,z^2)\bigr).
$$
where
$$
f_d(t,z)=\frac{(1-z)}{(1-t)(1-t z)\ldots (1-t z^d)}=\frac{1-z}{(t,z)_{d+1}}.
$$
and  $(a,q)_n=(1-a) (1-a\,q)\cdots (1-a\,q^{n-1})$ -- $q$-shifted factorial.

\end{lm}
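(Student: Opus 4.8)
The plan is to read off both sides coefficient by coefficient and to match them through the Cayley--Sylvester formula. By the very definition of $\Psi_{1,d}$, the operator extracts from $f_d(t,z^2)$ exactly the monomials $t^s z^{sd}$, so that
$$
\Psi_{1,d}\bigl(f_d(t,z^2)\bigr)=\sum_{s\geq 0} c_s\, T^s, \qquad c_s:=\bigl[t^s z^{sd}\bigr]\, f_d(t,z^2).
$$
Hence it will suffice to prove that $c_s=\dim (I_{2,d})_s$ for every $s$, and the whole lemma reduces to identifying this single coefficient.

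First I would expand the rational function as a double series. Writing
$$
\frac{1}{\prod_{j=0}^{d}(1-t z^{2j})}=\sum_{m_0,\dots,m_d\geq 0} t^{\,m_0+\cdots+m_d}\, z^{\,2(1\cdot m_1+\cdots+d\cdot m_d)},
$$
the coefficient of $t^s z^{2w}$ equals the number $N(w;s,d)$ of solutions in nonnegative integers of $m_0+\cdots+m_d=s$, $\sum_{j} j\,m_j=w$, i.e.\ the number of partitions of $w$ into at most $s$ parts each not exceeding $d$. Multiplying by the numerator $1-z^2$ turns a single coefficient into a difference, so that, since $z^{sd}=z^{2\cdot(sd/2)}$,
$$
c_s=N\!\left(\tfrac{sd}{2};\,s,d\right)-N\!\left(\tfrac{sd}{2}-1;\,s,d\right),
$$
with the convention $N(w;s,d)=0$ for non-integer or negative $w$; in particular $c_s=0$ whenever $sd$ is odd, matching the vanishing of invariants of odd degree.

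The final step is to recognise the right-hand side. The two terms count the weight vectors of weight $0$ and of weight $2$ in $\operatorname{Sym}^s(S^d V)$, where $V$ is the standard $SL(2,\mathbb{C})$-module: a degree-$s$ monomial in the coefficients carries weight $sd-2\sum_j j\,m_j$, so weight $0$ forces $\sum_j j m_j=sd/2$ and weight $2$ forces $\sum_j j m_j=sd/2-1$. By the Cayley--Sylvester formula---equivalently, because the multiplicity of the trivial $SL(2)$-module equals the weight-$0$ multiplicity minus the weight-$2$ multiplicity---this difference is exactly $\dim (I_{2,d})_s$. The same identity is the constant-term reading of the Molien--Weyl integral recalled above, the factor $1-z^2$ in $f_d$ playing the role of the Weyl measure $1-z^{-2}$. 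Combining the three displays gives $\Psi_{1,d}(f_d(t,z^2))=\sum_s \dim(I_{2,d})_s\,T^s=P_{2,d}(T)$.

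I expect the main obstacle to be the weight bookkeeping in this last step. The denominator of $f_d$ carries the shifted, asymmetric exponents $0,2,\dots,2d$, whereas the representation $S^d V$ has the symmetric weights $-d,-d+2,\dots,d$. The two are matched by the reflection $w\mapsto sd-w$ in each fixed degree $s$, which is precisely why $\Psi_{1,d}$ must extract the exponent $z^{sd}$ rather than $z^0$ in order to detect invariants. Keeping this shift consistent, and correctly accounting for the $1-z^2$ factor as a difference of two partition counts rather than a single one, is the only delicate point; everything else is the routine expansion above.
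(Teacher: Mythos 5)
Your proof is correct and follows essentially the same route as the paper: both arguments observe that $\Psi_{1,d}$ extracts exactly the coefficients $[t^s z^{sd}]f_d(t,z^2)=[(t z^{d/2})^s]f_d(t,z)$ and then identify these with $\dim(I_{2,d})_s$ via the Sylvester--Cayley formula. The only difference is that the paper cites that identification as a black box, whereas you unpack it by counting partitions of $sd/2$ and $sd/2-1$ into at most $s$ parts of size at most $d$ and interpreting the difference as the weight-$0$ minus weight-$2$ multiplicity in $\operatorname{Sym}^s(S^dV)$ --- a correct and more self-contained version of the same step.
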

\begin{proof}  For any  $A \in \mathbb{C}[[t,z]]$      denote  by  $[t^n z^m]A$  the coefficients in  $t^n z^m.$
The Sylvester-Cayley formula   implies  that  the dimension  of  the vector space  $(I_{2,d})_n$ is  equal  to  $[(t z^{d/2})^n]f_d(t,z).$ 
Then
$$
P_{2,d}(T) = \sum_{n=0}^{\infty}  \dim(I_{2,d})_n T^n= \sum_{n=0}^{\infty} \bigl([(t z^{d/2})^n]f_d(t,z)\bigr) T^n =\sum_{n=0}^{\infty} \bigl([(t z^{d})^n]f_d(t,z^2)\bigr)T^n=
$$
$$
= \Psi_{1,d}(f_d(t,z^2)).
$$
\end{proof}

Now we  can present  simple proof of  the Springer formula  for the Poincar\'e  series  $P_{2,d}(T)$
\begin{te}[Springer]
$$
P_{2,d}(T)=\sum_{0\leq k <d/2} \varphi_{d-2\,k} \left( \frac{(-1)^k z^{k(k+1)} (1-z^2)}{(z^2,z^2)_k\,(z^2,z^2)_{d-k}} \right),
$$
\end{te}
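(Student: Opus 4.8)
The plan is to start from Lemma 3, which already reduces the whole problem to computing $\Psi_{1,d}(f_d(t,z^2))$, and then to expand $f_d(t,z^2)$ into partial fractions in the variable $t$ so that Lemma 2 can be applied term by term. Writing
$$
f_d(t,z^2)=\frac{1-z^2}{(1-t)(1-t z^2)\cdots(1-t z^{2d})},
$$
I observe that, as a rational function of $t$, the numerator has degree $0$ while the denominator has degree $d+1$ and the $d+1$ simple poles $t=z^{-2k}$, $k=0,1,\ldots,d$. Hence there is a partial fraction expansion
$$
f_d(t,z^2)=\sum_{k=0}^{d}\frac{c_k(z)}{1-t z^{2k}},
$$
and the first task is to determine the coefficients $c_k(z)$.

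Second, I would compute each $c_k(z)$ by the cover-up method: multiplying by $(1-t z^{2k})$ and setting $t=z^{-2k}$ gives
$$
c_k(z)=\frac{1-z^2}{\prod_{j=0,\,j\neq k}^{d}\bigl(1-z^{2(j-k)}\bigr)}.
$$
Splitting the product into the factors with $j<k$ and those with $j>k$, the latter gives directly $(z^2,z^2)_{d-k}$, while for the former I use the elementary identity $1-z^{-2l}=-z^{-2l}(1-z^{2l})$ to convert the negative powers, obtaining $(-1)^k z^{-k(k+1)}(z^2,z^2)_k$. Combining these I expect the denominator to reduce to $(-1)^k z^{-k(k+1)}(z^2,z^2)_k\,(z^2,z^2)_{d-k}$, which yields exactly
$$
c_k(z)=\frac{(-1)^k z^{k(k+1)}(1-z^2)}{(z^2,z^2)_k\,(z^2,z^2)_{d-k}},
$$
the argument appearing in the theorem. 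This bookkeeping of the sign $(-1)^k$ and the power $z^{-k(k+1)}$ arising from the product over the negative indices is the main (though routine) obstacle.

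Third, applying $\Psi_{1,d}$ to the expansion and invoking Lemma 2 with $m=1$, $n=d$ and $z$-exponent equal to $2k$, each term becomes $\varphi_{d-2k}(c_k(z))$ when $d\geq 2k$ and vanishes when $d<2k$. It remains to justify the strict bound $k<d/2$ in the summation. For $d$ odd, $2k\leq d$ already forces $k<d/2$ and every surviving term has $d-2k\geq 1$. For $d$ even the value $k=d/2$ is still admitted by Lemma 2, but then $d-2k=0$ and the corresponding term picks out only the constant term in $z$ of $c_{d/2}(z)$; since for $k=d/2\geq 1$ the factor $z^{k(k+1)}$ makes $c_{d/2}(z)$ begin at a strictly positive power of $z$, that constant term is $0$ and the term drops out. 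This is precisely why the upper limit is strict, and collecting the surviving terms gives the stated formula.
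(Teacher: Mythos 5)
Your proof is correct and follows essentially the same route as the paper: partial fraction decomposition of $f_d$ in the variable $t$, cover-up evaluation of the coefficients, and termwise application of Lemmas 2 and 3. The only difference is cosmetic (you decompose $f_d(t,z^2)$ directly rather than decomposing $f_d(t,z)$ and then substituting $z\mapsto z^2$), and you additionally supply the justification, omitted in the paper, for why the $k=d/2$ term vanishes when $d$ is even.
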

\begin{proof}
Consider the partial fraction decomposition of  the rational function $f_d(t,z):$ 
$$
f_d(t,z)=\sum_{k=0}^{d} \frac{R_k(z)}{1-t z^k}.
$$
It  is easy  to  see,  that  
$$
\begin{array}{l}
\displaystyle  R_k(z)=\lim_{t \to z^{-k}}\left( f_d(t,z)(1-t z^k) \right)=\\
\displaystyle =\frac{1-z}{(1-z^{-k}) \ldots (1-z^{-k} z^{k-1}) \cdots (1-z^{-k} z^{k+1}) \cdots (1-z^{-k} z^{d})}=
\\
\displaystyle=\frac{1-z}{z^{-(k+(k-1)+\ldots+1)}(z^{k}-1)(z^{k-1}-1) \ldots (z-1) \cdots (1-z) \cdots (1-z^{d-k})}=
\\
\\
\displaystyle=\frac{(-1)^k z^{\frac{k(k+1)}{2}} (1-z)}{(z,z)_{k} (z,z)_{d-k}}.
\end{array}
$$
Using  the above lemmas we obtain
$$
P_{2,d}=\Psi_{1,d}\bigl(f_d(t,z^2)\bigr)=\Psi_{1,d}\left( \sum_{k=0}^{n} \frac{R_k(z^2)}{1-t z^{2\,k}} \right)= \sum_{0\leq k <d/2} \varphi_{d-2\,k} \left( \frac{(-1)^k z^{k(k+1)}(1-z^2)}{(z^2,z^2)_k (z^2,z^2)_{d-k}} \right).
$$
\end{proof}


{\bf 3.}  Let us derive an formula for the Poicar\'e series  of  algebra of  invariants of  ternary form.  In  \cite{ASM}  was  proved that the  dimension  of  the vector  space    $(I_{3,d})_n $  is  equal  to   $\displaystyle [ \bigl(t (pq)^{\frac{d}{3}}\bigr)^n] f_d(t,p,q) ,$  where

$$
f_d(t,p,q)=\frac{b_3(p,q)}{\displaystyle \prod_{k+l \leq  d } (1-t p^k q^l)}=\frac{b_3(p,q)}{\displaystyle \prod_{s=0}^d (t q^s,p)_{d+1-s}},
$$
and $b_3(p,q)=1+p\,q+\displaystyle \frac{q^2}{p}-2\,q-q^2.$

On  the ring  $\mathbb{C}[[t,p,q,p^{-1},q^{-1}]]$ let  us  define  the    $\mathbb{C}$-linear  functions  $ \Phi_{n_1,n_2,n_3}$ and  $\hat \Phi_{n_1,n_2},$ $n_i\geq 0$  in the  following  way
$$
\Phi_{n_1,n_2,n_3}\bigl(t^{m_1} p^{m_2} q^{m_3}\bigr)=\left \{ \begin{array}{l}T^{s}, \text{ if  } \displaystyle  \frac{m_1}{n_1}=\frac{m_2}{n_2}= \frac{m_3}{n_3}=s \in \mathbb{N},  m_i \geq 0,  \\  \\ 0, \text{ otherwise. } \end{array} \right. 
$$
$$
\hat \Phi_{n_1,n_2}\bigl( p^{m_1} q^{m_2}\bigr)=\left \{ \begin{array}{l} T^{s}, \text{  if   } \displaystyle \frac{m_1}{n_1}= \frac{m_2}{n_2}=s \in \mathbb{N},  m_i \geq 0,  \\  \\ 0, \text{ otherwise } \end{array} \right. 
$$

The following result may be proved in much the same way as Lemma 2 and  Lemma 3 : 

\begin{lm}
$$
\begin{array}{ll}
(i) & \text{ For  arbitrary   } R \in \mathbb{C}[[p,q,p^{-1},q^{-1}]] \text{  and  for any natural  }   m, n, k  \\
& \text{ we  have:} \\
\\
& \displaystyle \Phi_{n_1,n_2,n_3}\left( \frac{R}{1-t^{n_1} p^k q^l } \right)=\left \{ \begin{array}{l} \hat  \Phi_{n_2-k, n_3-l}(R)  \text{ if  } n_2 \geq k \text{   and   } n_3 \geq l,\\  \\ 0,  \text{  if  } n_2 < k  \text{ or   } n_3 < l. \end{array} \right. ;\\

(ii)& P_{3,d}(T)=\Psi_{1,d,d}\bigl(f_d(t,p^3,q^3)\bigr).
\end{array}
$$
\end{lm}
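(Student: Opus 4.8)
The plan is to prove the two parts independently, each as a direct transcription of an earlier lemma. For $(i)$ I would repeat the computation of Lemma 3 in the two letters $p,q$. Writing $R=\sum_{i,j}f_{i,j}\,p^{i}q^{j}$ (with $i,j\in\mathbb{Z}$, since $R$ is a Laurent object) and expanding the geometric series,
$$
\frac{R}{1-t^{n_1}p^{k}q^{l}}=\sum_{i,j}\sum_{s\ge 0}f_{i,j}\,t^{n_1 s}\,p^{\,i+ks}\,q^{\,j+ls},
$$
I would then apply $\Phi_{n_1,n_2,n_3}$ term by term. A monomial $t^{n_1 s}p^{\,i+ks}q^{\,j+ls}$ survives exactly when $\tfrac{n_1 s}{n_1}=\tfrac{i+ks}{n_2}=\tfrac{j+ls}{n_3}$, i.e. when $i=(n_2-k)s$ and $j=(n_3-l)s$, and it then contributes $f_{(n_2-k)s,(n_3-l)s}\,T^{s}$. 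Summing over $s$ gives $\sum_{s\ge 0}f_{(n_2-k)s,(n_3-l)s}\,T^{s}$, which is precisely $\hat\Phi_{n_2-k,n_3-l}(R)$ by the definition of $\hat\Phi$, provided both weights $n_2-k$ and $n_3-l$ are non-negative.

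The case $n_2<k$ or $n_3<l$ is where I expect the only real difficulty. Here the surviving relation $i=(n_2-k)s$ (resp. $j=(n_3-l)s$) forces a strictly negative $p$-exponent (resp. $q$-exponent) for every $s\ge 1$, so the extracted diagonal meets only the negative part of the Laurent expansion of $R$. I would argue that for the residues $R$ that actually occur --- those coming from the partial-fraction decomposition of $f_d(t,p^3,q^3)$ in the variable $t$ --- these diagonal coefficients vanish, forcing the value $0$. Because $R$ lives in $\mathbb{C}[[p,q,p^{-1},q^{-1}]]$ rather than in an honest power-series ring, the whole point is to keep the sign of the exponents under control; this sign bookkeeping, not any new idea, is the main obstacle.

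For $(ii)$ I would follow the proof of Lemma 2 verbatim. The input is the Sylvester--Cayley-type formula of \cite{ASM}, namely $\dim(I_{3,d})_n=[(t\,(pq)^{d/3})^{n}]f_d(t,p,q)=[t^{n}p^{\,nd/3}q^{\,nd/3}]f_d(t,p,q)$. The substitution $p\mapsto p^{3}$, $q\mapsto q^{3}$ clears the denominator $3$ in the exponents, so that $[t^{n}p^{\,nd/3}q^{\,nd/3}]f_d(t,p,q)=[t^{n}p^{\,nd}q^{\,nd}]f_d(t,p^{3},q^{3})=[(t\,(pq)^{d})^{n}]f_d(t,p^{3},q^{3})$. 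Hence
$$
P_{3,d}(T)=\sum_{n\ge 0}\dim(I_{3,d})_n\,T^{n}=\sum_{n\ge 0}\bigl([(t\,(pq)^{d})^{n}]f_d(t,p^{3},q^{3})\bigr)\,T^{n},
$$
and the last sum is, by the very definition of the three-index function with weights $(1,d,d)$ applied to $f_d(t,p^{3},q^{3})$ --- it keeps the monomials with $m_1=s$, $m_2=m_3=sd$ --- exactly $\Phi_{1,d,d}\bigl(f_d(t,p^{3},q^{3})\bigr)$. (I read the symbol $\Psi_{1,d,d}$ in the statement as $\Phi_{1,d,d}$, since $\Psi$ carries two indices and $\Phi$ three.) This part is routine; the only thing to check is that a non-integral value of $nd/3$ contributes nothing on either side, which is automatic.
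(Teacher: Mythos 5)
The paper gives no proof of this lemma at all---it merely asserts that it ``may be proved in much the same way as Lemma 2 and Lemma 3''---and your proposal is precisely that analogous argument, correctly executed for part $(ii)$ and for the main case $n_2\geq k$, $n_3\geq l$ of part $(i)$. The point you flag is real and is silently ignored by the paper: because $R$ lies in $\mathbb{C}[[p,q,p^{-1},q^{-1}]]$, the diagonal $i=(n_2-k)s$ meets negative exponents when $n_2<k$, and the vanishing claim can actually fail for arbitrary Laurent $R$ (e.g.\ $R=p^{-1}$, $k=2$, $l=1$, $n_1=n_2=n_3=1$ gives the surviving term $tpq\mapsto T$), so your plan of checking the vanishing only for the residues $R_{k,j}$ that actually occur in Theorem 2 is the right repair, and your treatment is at least as complete as the paper's.
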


Now  we  are able to  prove the analogue of Springer's formula for the  Poincar\'e series of  the  algebra invariants of  ternary  form.
\begin{te}
$$
P_{3,d}(T)=\sum_{0\leq k,j \leq [d/3]}\hat  \Phi_{d-3\,k,d-3\,j} \left( \frac{(-1)^k p^{\frac{3k(k+1)}{2}} b_3(p^3,q^3)}{ \Bigl( \prod_{s=0, s \neq j}^d (q^{-3k},p^{3(s-j)})_{d+1-s} \Bigr)  (p^3,p^3)_k\,(p^3,p^3)_{d-(k+j)}} \right).
$$
\end{te}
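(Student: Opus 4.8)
The plan is to run the argument of Theorem 1 one variable higher, with the pair $(p,q)$ playing the role of $z$ and $\hat\Phi$ playing the role of $\varphi$. By the identity of Lemma 4(ii), $P_{3,d}(T)=\Phi_{1,d,d}\bigl(f_d(t,p^3,q^3)\bigr)$ (the subscript triple matching the three-variable function $\Phi$), so the whole computation reduces to a partial-fraction decomposition of $f_d$ in the single variable $t$, followed by a termwise application of Lemma 4(i).

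First I would decompose $f_d(t,p,q)$ into partial fractions in $t$. Its denominator $\prod_{s=0}^{d}(tq^{s},p)_{d+1-s}$ is the product of the linear factors $1-t\,p^{k}q^{l}$ over all $(k,l)$ with $k+l\le d$; since $p,q$ are independent indeterminates these monomials are distinct, the poles are simple, and
$$
f_d(t,p,q)=\sum_{k+l\le d}\frac{R_{k,l}(p,q)}{1-t\,p^{k}q^{l}},\qquad R_{k,l}(p,q)=\lim_{t\to p^{-k}q^{-l}}\bigl(1-t\,p^{k}q^{l}\bigr)f_d(t,p,q).
$$
Substituting $t=p^{-k}q^{-l}$ into the surviving factors gives $R_{k,l}(p,q)=b_3(p,q)\big/\prod(1-p^{\,i-k}q^{\,s-l})$, the product running over all $(i,s)\ne(k,l)$ with $i+s\le d$.

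The heart of the proof is the simplification of this residue, and this is where I expect the bulk of the work to lie. I would separate the block $s=l$ from the blocks $s\ne l$. The block $s=l$ reproduces exactly the one-variable Springer computation: $\prod_{i\ne k}(1-p^{\,i-k})$ splits over $i<k$ and $i>k$ and, after pulling out the power of $p$, yields $(-1)^{k}p^{-k(k+1)/2}(p,p)_{k}(p,p)_{d-k-l}$; this single block is responsible for the sign, for the factor $p^{3k(k+1)/2}$, and for the two factorials $(p^3,p^3)_{k}$ and $(p^3,p^3)_{d-(k+j)}$ after the substitution $p\mapsto p^{3}$ and the renaming $l=j$. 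Each remaining block $s\ne l$ contributes one $q$-shifted factorial in the base $p$ with argument $p^{-k}q^{\,s-l}$, and these assemble into the product of shifted factorials $\prod_{s\ne j}(\cdots)_{d+1-s}$ displayed in the statement. The only delicate points here are the correct bookkeeping of the $p$-exponents and the recognition of each one-dimensional subproduct as a shifted factorial.

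Finally I would apply $\Phi_{1,d,d}$ termwise to the decomposition of $f_d(t,p^3,q^3)$. By Lemma 4(i), with $n_1=1$, $n_2=n_3=d$ and pole exponents $(3k,3l)$,
$$
\Phi_{1,d,d}\!\left(\frac{R_{k,l}(p^3,q^3)}{1-t\,p^{3k}q^{3l}}\right)=\hat\Phi_{\,d-3k,\;d-3l}\bigl(R_{k,l}(p^3,q^3)\bigr)
$$
whenever $3k\le d$ and $3l\le d$, and vanishes otherwise; hence only the indices $0\le k,l\le[d/3]$ survive, and these automatically satisfy $k+l\le d$. Inserting the simplified residue and writing $j$ for $l$ yields the asserted formula. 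The genuine subtlety I would flag --- the analogue of what Lemma 1 does for $\varphi$ --- is that $\hat\Phi$ is defined only on monomials, so before it can be applied the residue $R_{k,l}(p^3,q^3)$ must first be expanded as a well-defined element of $\mathbb{C}[[p,q,p^{-1},q^{-1}]]$; one must check that this expansion is the one produced by the geometric series used in Lemma 4(i), so that the termwise passage from $\Phi_{1,d,d}$ to $\hat\Phi_{d-3k,d-3l}$ is legitimate.
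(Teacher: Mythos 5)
Your proposal is correct and follows essentially the same route as the paper: partial fraction decomposition of $f_d(t,p,q)$ in the variable $t$, evaluation of the residues $R_{k,j}$ by the limit $t\to p^{-k}q^{-j}$ (with the $s=j$ block giving the Springer-type sign, power of $p$, and the two $(p,p)$ factorials, and the $s\neq j$ blocks giving the remaining shifted factorials), followed by a termwise application of Lemma 4(i) with $\Phi_{1,d,d}$, which kills all terms outside $0\le k,j\le[d/3]$. Your added remarks on the expansion of the residues in $\mathbb{C}[[p,q,p^{-1},q^{-1}]]$ are a sensible point of care but do not change the argument.
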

\begin{proof}
Consider the partial fraction decomposition of  the rational function $f_d(t,p,q):$
$$
f_d(t,p,q)=\sum_{k+j \leq d} \frac{R_{k,j} (p,q)}{1-t p^k q^j}.
$$
We  have 
$$
\begin{array}{l}
\displaystyle R_{k,j}(p,q)=\lim_{t \to p^{-k} q^{-j}}\left( f_d(t,p,q)(1-t p^k q^j) \right)=\lim_{t \to p^{-k} q^{-j}}\left(\frac{b_3(p,q)(1-t p^k q^j)}{\Bigl(\prod_{ s \neq j }^d (t q^{s},p)_{d+1-s}\Bigr) (t q^j,p)_{d+1-j}}\right)=\\

= \displaystyle \frac{b_3(p,q)}{\Bigl(\prod_{ s \neq j }^d (p^{-k}q^{s-j},p)_{d+1-s}\Bigr) (1-p^{-k})\ldots(1-p^{-1})(1-p)\ldots (1-p^{d-j-k})}=\\

=\displaystyle \frac{(-1)^k p^{\frac{k(k+1)}{2}} b_3(p,q)}{\Bigl(\prod_{ s \neq j }^d (p^{-k}q^{s-j},p)_{d+1-s}\Bigr) (p,p)_k (p,p)_{d-k-j}}.
\end{array}
$$
The  Lemma 4  now  yields
$$
P_{3,d}(T)=\Phi_{1,d,d}\left(\sum_{k+j \leq d  } \frac{R_{k,j}(p^3,q^3)}{1-t p^{3k}q^{3j}} \right) =\sum_{k+j \leq d} \hat \Phi_{d-3k,d-3j}\Bigl(R_{k,j}(p^3,q^3) \Bigr)=
$$
$$
=\sum_{0\leq k,j \leq [d/3]} \hat \Phi_{d-3k,d-3j}\Bigl(R_{k,j}(p^3,q^3) \Bigr).
$$
\end{proof}

\end{document}